\def\Bbb{\mathbb}
\def\eea{\end{eqnarray*}}
\newtheorem{thm}{Theorem}[section]
\newtheorem{prop}[thm]{Proposition}
\newtheorem{cor}[thm]{Corollary}
\newtheorem{lem}[thm]{Lemma}
\newtheorem{conj}[thm]{Conjecture}
\newenvironment{rmk}{\mbox{ }\\{\bf  Remark}\mbox{ }}{
\hfill $\Box$\mbox{}\bigskip}
\begin{document}

\renewcommand{\theequation}{\thesection.\arabic{equation}}

\title{Ricci curvature and monopole classes on $3$-manifolds}

\author{Chanyoung Sung}

\date{\today}
%\date{June 12, 2009}

\address{Dept. of Mathematics and Institute for Mathematical Sciences \\
Konkuk University\\
         1 Hwayang-dong, Gwangjin-gu, Seoul, KOREA}
\email{cysung@kias.re.kr}
\thanks{This work was supported by the National Research Foundation of Korea(NRF) grant funded by the Korea government(MEST). (No. 2011-0002791, 2012-0000341)}
\keywords{Seiberg-Witten equations, Ricci curvature, monopole class}
%\subjclass[2000]{57R57, 57M50, 53C99}
\subjclass[2010]{57R57, 57M50, 53C99}

\begin{abstract}
We prove an $L^2$-estimate involving Ricci curvature and a harmonic 1-form on a closed oriented Riemannian $3$-manifold admitting a solution of any rescaled
Seiberg-Witten equations. We also give a necessary condition to be a monopole class on some special connected sums.
\end{abstract}
\maketitle
%\vfill
%\pagebreak

\setcounter{section}{0}
\setcounter{equation}{0}

\section{Introduction}
A second cohomology class is called a \emph{monopole class}
if it arises as the first Chern class of a Spin$^c$ structure for
which the Seiberg-Witten equations
%$$\left\{
%\begin{array}{ll} D_A\Phi=0\\
%  F_{A}^+=\Phi\otimes\Phi^*-\frac{|\Phi|^2}{2}\textrm{Id},
%\end{array}\right.
%$$
admit a solution for every choice of a Riemannian metric.
%on a smooth closed oriented 4-manifold $X$.
It is well-known by LeBrun
\cite{LB1,LB2,LB4} that the existence of a monopole class gives
various curvature estimates of a Riemannian $4$-manifold. These
immediately give corresponding estimates on $3$-manifolds by using the dimensional reduction.
\begin{thm}[\cite{sung}]\label{th0}
Let $(M,g)$ be a smooth closed oriented Riemannian $3$-manifold with
$b_1(M)\geq 1$. Suppose that it admits a solution of the Seiberg-Witten equations
for a Spin$^c$ structure $\mathfrak{s}$. Then
\begin{eqnarray*}
\int_M (s_-)_g^2\ d\mu_g\geq \frac{16\pi^2 |c_1(\mathfrak{s})\cup
[\omega]|^2}{\int_M |\omega|_g^2\ d\mu_g},
\end{eqnarray*}
where  $(s_-)_g$ is $\min (s_g, 0)$ at each point and $s_g$ is the scalar curvature of $g$. Furthermore if the Seiberg-Witten invariant
of $\mathfrak{s}$ is nonzero, then for a nonzero element $\omega$ in $H^1_{DR}(M)$
\begin{eqnarray*}
\int_M |r_g|^2 d\mu_g\geq \frac{8\pi^2 |c_1(\mathfrak{s})\cup
[\omega]|^2}{\int_M |\omega|_g^2\ d\mu_g},
\end{eqnarray*}
where $r_g$ is the Ricci curvature of $g$.
\end{thm}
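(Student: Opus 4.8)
The plan is to deduce both inequalities from LeBrun's four-dimensional curvature estimates \cite{LB1,LB2,LB4} applied to the product $X = M\times S^1$, in the dimensional-reduction spirit of \cite{sung}. Equip $M\times S^1$ with the product metric $\tilde g = g + dt^2$, where $t$ runs over a circle of circumference $\ell$, and let $\tilde{\mathfrak s} = \pi^*\mathfrak s$ be the pulled-back $\mathrm{Spin}^c$ structure, so that $c_1(\tilde{\mathfrak s}) = \pi^*c_1(\mathfrak s)$. The crucial structural fact is that the three-dimensional Seiberg-Witten equations are exactly the reduction of the four-dimensional ones to $S^1$-invariant configurations: a solution on $(M,g)$ yields an $S^1$-invariant solution on $(X,\tilde g)$, and a nonzero three-dimensional invariant of $\mathfrak s$ forces $c_1(\tilde{\mathfrak s})$ to be a monopole class on $X$, making LeBrun's estimates available for \emph{every} metric on $X$, in particular for all product metrics.

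Before estimating I would record how the relevant quantities reduce. Writing $\Omega$ for the $g$-harmonic representative of $c_1(\mathfrak s)$, a computation with the product Hodge star gives $(c_1^+(\tilde{\mathfrak s}))^2 = \tfrac{\ell}{2}\int_M|\Omega|_g^2\,d\mu_g$, while $s_{\tilde g}=s_g$, and, because the $S^1$-factor is flat and parallel, the four-dimensional self-dual Weyl curvature satisfies $|W^+|_{\tilde g}=\tfrac12|\mathring r_g|$, with $\mathring r_g$ the trace-free Ricci curvature. Finally, Cauchy-Schwarz applied to $c_1(\mathfrak s)\cup[\omega] = \int_M\langle *\Omega,\omega\rangle_g\,d\mu_g$ gives $\int_M|\Omega|_g^2\,d\mu_g \ge |c_1(\mathfrak s)\cup[\omega]|^2 / \int_M|\omega|_g^2\,d\mu_g$ for every harmonic $1$-form $\omega$, which is how the topological pairing enters the final bound.

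For the first inequality I would invoke only the basic Witten estimate $\int_X (s_-)_{\tilde g}^2\,d\mu_{\tilde g}\ge 32\pi^2\,(c_1^+(\tilde{\mathfrak s}))^2$, which holds as soon as a solution exists for the metric at hand and so matches the weaker hypothesis. Substituting $s_{\tilde g}=s_g$ and the value of $(c_1^+)^2$, the factor $\ell$ cancels and one is left with $\int_M (s_-)_g^2\,d\mu_g \ge 16\pi^2\int_M|\Omega|_g^2\,d\mu_g$, whence the claim after the Cauchy-Schwarz step. For the second inequality I would instead use LeBrun's refined estimate $\int_X (s-\sqrt6\,|W^+|)^2\,d\mu_{\tilde g}\ge 72\pi^2\,(c_1^+(\tilde{\mathfrak s}))^2$, which genuinely requires the monopole-class hypothesis, hence the assumption that the invariant is nonzero. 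Inserting $s_{\tilde g}=s_g$ and $|W^+|=\tfrac12|\mathring r_g|$ turns the integrand into $(s_g-\tfrac{\sqrt6}{2}|\mathring r_g|)^2$, and the elementary inequality $(s_g-\tfrac{\sqrt6}{2}|\mathring r_g|)^2\le \tfrac92\big(\tfrac{s_g^2}{3}+|\mathring r_g|^2\big)=\tfrac92|r_g|^2$ (using $|r_g|^2=|\mathring r_g|^2+\tfrac13 s_g^2$ in dimension three) converts it into $|r_g|^2$; after cancelling $\ell$ the constant comes out to exactly $8\pi^2\int_M|\Omega|_g^2$, giving the statement.

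The step I expect to be the main obstacle is the curvature bookkeeping on the product, above all the identity $|W^+|_{\tilde g}=\tfrac12|\mathring r_g|$ together with the evaluation of $(c_1^+)^2$. One must diagonalise the four-dimensional curvature operator of $g+dt^2$, exploit that its $e_i\wedge dt$ block vanishes (so that $W^+=W^-$, consistent with $\tau(X)=0$), and match the three-dimensional Hodge-star conventions so that the numerical factors — and hence the precise constants $16\pi^2$ and $8\pi^2$ — emerge correctly; a useful consistency check is that this value of $|W^+|^2$ is forced by the Gauss-Bonnet integrand together with $\chi(X)=0$. A secondary point is justifying the identification of the nonvanishing three-dimensional Seiberg-Witten invariant of $\mathfrak s$ with the monopole-class property of $\tilde{\mathfrak s}$ on $M\times S^1$, which is exactly what upgrades ``admits a solution'' to ``admits a solution for every metric'' and thereby licenses LeBrun's refined estimate.
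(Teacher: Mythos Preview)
Your proposal is correct and follows essentially the same dimensional-reduction route to $M\times S^1$ that the paper indicates: Theorem~\ref{th0} is quoted from \cite{sung} here without a self-contained proof, but the paper's remark that it follows from LeBrun's four-dimensional estimates ``by using the dimensional reduction'' is exactly what you carry out, and the paper's detailed proof of the closely related Theorem~\ref{th1} has the same architecture (product metric, LeBrun's $(s-\sqrt{6}|W_+|)$ estimate, then the bound on $(c_1^+)^2$). The only cosmetic difference is that where you compute $|W_+|_{g+dt^2}=\tfrac12|\mathring r_g|$ pointwise and then use the elementary inequality $(s_g-\tfrac{\sqrt6}{2}|\mathring r_g|)^2\le\tfrac92|r_g|^2$, the paper instead appeals to the four-dimensional Chern--Gauss--Bonnet identity (with $2\chi+3\tau=0$ on $M\times S^1$) to pass from $\int|r|^2$ to $\int(\tfrac{s^2}{3}+4|W_+|^2)$ before applying LeBrun; the two conversions are equivalent, as you yourself anticipate in your consistency check.
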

Here, the Seiberg-Witten invariant in case of $b_1(M)=1$  means that of the chamber for arbitrarily small perturbations.
We conjectured that the above Ricci curvature estimate still holds true when
$c_1(\mathfrak{s})$ is a monopole class.

In this article, we show that it holds true
if $c_1(\mathfrak{s})$ is a {\it strong monopole class} meaning that it is the first Chern class of a  Spin$^c$ structure of $M$ which admits a solution of the rescaled Seiberg-Witten equations of $\mathfrak{s}$ for any rescaling and any Riemannian metric.(See Theorem \ref{th1}.)

Following LeBrun \cite{LB5}, we define the $f$-rescaled Seiberg-Witten equations on a 3-manifold $M$ to be
$$\left\{
\begin{array}{ll} D_A\Phi=0\\
  F_{A}=(\Phi\otimes\Phi^*-\frac{|\Phi|^2}{2}\textrm{Id})f,
\end{array}\right.
$$
where the rescaling factor $f$ is a positive smooth function on $M$.
An obvious but important fact is that if the Seiberg-Witten invariant of $\mathfrak{s}$ is nonzero, then $c_1(\mathfrak{s})$ is a strong monopole class. But it is not  known yet whether every monopole class is a strong monopole class.

In general, it is very difficult to find a monopole class
%or a Spin$^c$ structure
which has zero Seiberg-Witten invariant. Connected sums of 4-manifolds with $b_2^+>1$ are good candidates. In dimension 4, Bauer and Furuta \cite{bau-fur, bau} devised a new refined invariant of Seiberg-Witten moduli space to prove the existence of a monopole class on some connected sums of K\"ahler surfaces. But it seems that no 3-dimensional example has been found yet. In the final section, we apply our curvature estimates to find a necessary condition to be a monopole class on some special connected sums.

For a brief introduction to the Seiberg-Witten theory, the readers
are referred to \cite{morgan,sung2}.

\section{The Yamabe problem for Modified scalar curvature}

Let $(X,g)$ be a smooth closed oriented Riemannian $4$-manifold and
$W_+$ be the self-dual Weyl curvature. By the modified scalar
curvature we mean $$\frak{S}\equiv s-\sqrt{6}|W_+|.$$ We will denote
the set of $C^{2,\alpha}$ metrics for $\alpha\in (0,1)$ conformal to
$g$ by $[g]$. Assume that there exists a metric in $[g]$ with
nonpositive $\int_X \frak{S}\ d\mu$. Then as observed by Gursky
\cite{gur} and LeBrun \cite{LB3}, the standard proof of the Yamabe
problem \cite{LP} proves that there exists a $C^{2,\alpha}$ metric
in $[g]$ such that $\frak{S}$ is a nonpositive constant. As in
Yamabe problem, it is a ``minimizer" realizing
$$\frak{Y}(X,[g])\equiv\inf_{\tilde{g}\in[g]}
\frac{\int_X\frak{S}_{\tilde{g}}\
d\mu_{\tilde{g}}}{(\textrm{Vol}_{\tilde{g}})^{\frac{1}{2}}}.$$ We
also have
\begin{lem}\label{uniq}
For $r\in [2,\infty]$,
$$\frak{Y}(X,[g])=-\inf_{\tilde{g}\in[g]}(\int_X |\frak{S}_{\tilde{g}}|^r
d\mu_{\tilde{g}})^{\frac{1}{r}}(\textrm{Vol}_{\tilde{g}})^{\frac{1}{2}-\frac{1}{r}}$$
where the infimum is realized only by the minimizer which is unique
up to a constant multiplication.
\end{lem}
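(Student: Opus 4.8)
The plan is to split the claimed identity into the two inequalities $-\inf_{\tilde g}P_r(\tilde g)\ge\frak{Y}(X,[g])$ and $-\inf_{\tilde g}P_r(\tilde g)\le\frak{Y}(X,[g])$, where I abbreviate $P_r(\tilde g)=(\int_X|\frak{S}_{\tilde g}|^r\,d\mu_{\tilde g})^{1/r}(\mv_{\tilde g})^{\frac12-\frac1r}$. Both $\frak{Y}$ and each $P_r$ are unchanged when $\tilde g$ is multiplied by a positive constant, so I may fix the minimizer $g_0\in[g]$ with $\frak{S}_{g_0}\equiv c\le0$ constant and normalize $\mv_{g_0}=1$; then $\frak{Y}(X,[g])=c$. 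The one structural input I will use throughout is the conformal covariance of $\frak{S}$ noted by Gursky and LeBrun: for $\hat g=w^2\bar g$ in dimension four, $w^3\frak{S}_{\hat g}=-6\Delta_{\bar g}w+\frak{S}_{\bar g}\,w$, so that $-6\Delta+\frak{S}$ behaves exactly like the conformal Laplacian. The first inequality is then immediate by testing with $g_0$ itself: $P_r(g_0)=(|c|^r)^{1/r}=|c|=-c$, whence $\inf_{\tilde g}P_r\le-c$ and $-\inf_{\tilde g}P_r\ge c=\frak{Y}$.

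The real content is the reverse bound $P_r(\tilde g)\ge-c$ for every $\tilde g\in[g]$, and the key idea I would use is to integrate $\frak{S}_{\tilde g}$ not against $1$ but against the conformal factor carrying $\tilde g$ back to $g_0$. Concretely, write $g_0=v^2\tilde g$ with $v>0$ and apply the covariance law with base metric $\tilde g$ to obtain $\frak{S}_{\tilde g}\,v^2=c\,v^4+6\,v\Delta_{\tilde g}v$. Integrating against $d\mu_{\tilde g}$, using $\int_X v^4\,d\mu_{\tilde g}=\mv_{g_0}=1$ and integrating by parts gives $\int_X\frak{S}_{\tilde g}\,v^2\,d\mu_{\tilde g}=c-6\int_X|\nabla_{\tilde g}v|^2\,d\mu_{\tilde g}\le c$. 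Since $c\le0$ this yields $-c\le-\int_X\frak{S}_{\tilde g}\,v^2\,d\mu_{\tilde g}\le\int_X|\frak{S}_{\tilde g}|\,v^2\,d\mu_{\tilde g}$, and Cauchy--Schwarz together with $\int_X v^4\,d\mu_{\tilde g}=1$ bounds the last quantity by $(\int_X\frak{S}_{\tilde g}^2\,d\mu_{\tilde g})^{1/2}=P_2(\tilde g)$. Thus $P_2(\tilde g)\ge-c$. Because $P_r(\tilde g)$ equals $(\mv_{\tilde g})^{1/2}$ times the $L^r$-norm of $\frak{S}_{\tilde g}$ in the probability measure $d\mu_{\tilde g}/\mv_{\tilde g}$, the power-mean inequality makes $r\mapsto P_r(\tilde g)$ nondecreasing, so $P_r(\tilde g)\ge P_2(\tilde g)\ge-c$ for all $r\in[2,\infty]$. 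This gives $\inf_{\tilde g}P_r=-c$ and hence the asserted formula.

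For the uniqueness clause I would read off the equality cases of the chain above. Equality in $P_r(\tilde g)=-c$ forces, in particular, equality in $\int_X\frak{S}_{\tilde g}v^2\,d\mu_{\tilde g}=c-6\int_X|\nabla_{\tilde g}v|^2\,d\mu_{\tilde g}=c$, hence $\nabla_{\tilde g}v\equiv0$, i.e. $v$ is constant and $\tilde g$ is a homothety of $g_0$; conversely every homothety of $g_0$ saturates all the inequalities, so the $P_r$-minimizers are exactly the constant multiples of the $\frak{Y}$-minimizer. That $g_0$ is itself unique up to scale is the nonpositive-case uniqueness for this Yamabe-type problem, which I would import from \cite{LP} by running the maximum-principle argument for $-6\Delta+\frak{S}$ exactly as for the scalar curvature. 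The step I expect to require the most care is the degenerate case $c=0$: there the argument still gives $P_r\ge0$ with equality precisely when $\frak{S}_{\tilde g}\equiv0$, but the equality analysis alone no longer pins down the minimizer, so uniqueness up to scale must again be drawn from the conformal-Laplacian uniqueness rather than from the variational inequalities.
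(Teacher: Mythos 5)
Your proposal is correct and follows essentially the same route as the paper's proof (the Besson--Courtois--Gallot technique: conformal covariance of $\frak{S}$, integration by parts to produce the nonnegative gradient term, then H\"older); the only cosmetic differences are that you write the conformal factor from $\tilde g$ back to the minimizer rather than the reverse, and you treat $r=2$ by Cauchy--Schwarz and extend to all $r\in[2,\infty]$ by monotonicity of $L^r$-means, where the paper uses a single three-factor H\"older inequality for general $r$. Your caution about the case $c=0$ is unnecessary: equality there still forces $\frak{S}_{\tilde g}\equiv 0$, and then the covariance equation reduces to $\Delta_{\tilde g}v=0$, so $v$ is constant by the same argument.
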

\begin{proof}
For $r\in [2,\infty)$, we will use the technique of Besson, Courtois, and Gallot
\cite{BCG}. Let $g$ be a minimizer. Let $\tilde{g}=u^2g$, where $u
: X \rightarrow \Bbb R^+$ is a $C^2$ function. Note that $u$ satisfies the modified Yamabe equation
$${\frak{S}}_{\tilde{g}}u^{3}={\frak{S}}_gu+6\Delta_gu.$$ Therefore
\begin{eqnarray*}
(\int_X |{\frak{S}}_{\tilde{g}}|^r
d\mu_{\tilde{g}})^{\frac{1}{r}}(\textrm{Vol}_{\tilde{g}})^{\frac{1}{2}-\frac{1}{r}}&=&
(\int_X |{\frak{S}}_{\tilde{g}}|^r u^{4}d\mu_{g})^{\frac{1}{r}}
(\int_Xu^4d\mu_{g})^{\frac{1}{2}-\frac{1}{r}}\\
&\geq& \frac{\int_X -{\frak{S}}_{\tilde{g}}u^{2}d\mu_g}{(\int_Xd\mu_g)^{\frac{1}{2}}}\\
&=& \frac{\int_X -({\frak{S}}_{g}+6\frac{1}{u}d^*du)\ d\mu_g} {(\textrm{Vol}_{g})^{\frac{1}{2}}}\\
&=& \frac{\int_X(-{\frak{S}}_{g}+6\frac{|du|^2}{u^2})\ d\mu_g}{(\textrm{Vol}_{g})^{\frac{1}{2}}}\\
&\geq& \frac{\int_X -{\frak{S}}_{g}\
d\mu_g}{(\textrm{Vol}_{g})^{\frac{1}{2}}},
\end{eqnarray*}
where the first inequality is an application of the H\"older
inequality, and the equality holds iff $u$ is a positive constant.
It also follows that any minimizer is a constant multiple of
$g$.

The $L^\infty$ case is an immediate consequence of the other cases because of the inequality
$$
||\frak{S}_{\tilde{g}}||_{L^\infty}(\textrm{Vol}_{\tilde{g}})^{\frac{1}{2}}\geq
(\int_X |{\frak{S}}_{\tilde{g}}|^r d\mu_{\tilde{g}})^{\frac{1}{r}}(\textrm{Vol}_{\tilde{g}})^{\frac{1}{2}-\frac{1}{r}},
$$
whose equality is attained only when $\frak{S}_{\tilde{g}}$ is constant.
\end{proof}

\section{Ricci curvature estimate}

Let us start with the following lemma :
\begin{lem}\label{lem1}
Let $(M,g)$ be a smooth closed oriented Riemannian $3$-manifold and $\mathfrak{s}$ be a Spin$^c$ structure on it. If it admits a solution for a rescaled  Seiberg-Witten equations, then any $C^{2,\alpha}$-metric $\tilde{g}\in [g]$ also has a solution
of the rescaled Seiberg-Witten equations for $\mathfrak{s}$.
\end{lem}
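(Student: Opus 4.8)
The plan is to exploit the conformal covariance of the three-dimensional Seiberg--Witten equations, transporting a solution for $g$ to one for $\tilde g = u^2 g$ (with $u \in C^{2,\alpha}$ positive) while leaving the connection $A$ on the determinant line bundle fixed. Keeping $A$ fixed is what guarantees that $F_A$, and hence the underlying $\mathrm{Spin}^c$ structure $\mathfrak s$ and its first Chern class, are unchanged; only the spinor and the metric-dependent identifications get transformed.

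First I would transport the spinor by its conformal weight. Recall that in dimension $n$ the canonical identification $\beta \colon S_g \to S_{\tilde g}$ of the two spinor bundles (a fibrewise Hermitian isometry) intertwines the Dirac operators via $\tilde D_A(\beta(u^{-\frac{n-1}{2}}\psi)) = \beta(u^{-\frac{n+1}{2}} D_A\psi)$; for $n = 3$ this reads $\tilde D_A(\beta(u^{-1}\psi)) = \beta(u^{-2}D_A\psi)$. Hence if $D_A\Phi = 0$ then $\tilde\Phi := \beta(u^{-1}\Phi)$ satisfies $\tilde D_A \tilde\Phi = 0$, so the Dirac equation is preserved with the same $A$.

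The key step is to verify the curvature equation, which I would do by tracking conformal weights rather than by direct computation. The two-form $F_A$ itself does not change, but its identification with a trace-free endomorphism of the spinor bundle is metric-dependent: it factors through the Hodge star on two-forms, which scales by $u^{-1}$ in dimension three, followed by Clifford multiplication by the resulting one-form, contributing another factor $u^{-1}$ since $c_{\tilde g}(\xi) = \beta\, c_g(u^{-1}\xi)\,\beta^{-1}$. Thus the left-hand endomorphism scales by $u^{-2}$. On the right, $\tilde\Phi\otimes\tilde\Phi^* - \tfrac{|\tilde\Phi|^2}{2}\mathrm{Id}$ is quadratic in a weight $-1$ spinor and uses the $\tilde g$-Hermitian norm, so it too scales by $u^{-2}$. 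The two factors cancel, and one finds that $(A,\tilde\Phi)$ solves the curvature equation for $\tilde g$ with the \emph{same} rescaling factor $f$; at the zeros of $\Phi$ both sides vanish, so the identity holds there as well.

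The main obstacle is the careful bookkeeping in this last step: one must fix the conventions for $\beta$, check that it preserves the Hermitian metric on spinors, and confirm the quoted conformal weights of Clifford multiplication and of the Hodge star, so that the two $u^{-2}$ factors genuinely agree. A secondary but, for this paper, essential point is regularity: since the target metrics $\tilde g$ are only $C^{2,\alpha}$ (they arise as Yamabe-type minimizers, cf. Lemma~\ref{uniq}), I would check that $\tilde\Phi = \beta(u^{-1}\Phi)$ lies in the correct Hölder class and solves the equations in the appropriate sense---the Dirac operator for $\tilde g$ then has $C^{1,\alpha}$ coefficients, while $\tilde\Phi$ inherits $C^{2,\alpha}$ regularity from $u$ and the smooth solution $\Phi$---so that the conclusion genuinely produces a rescaled solution for every $\tilde g \in [g]$.
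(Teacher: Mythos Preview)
Your proposal is correct and follows essentially the same approach as the paper: keep the connection $A$ fixed and rescale the spinor by the conformal weight $u^{-1}$ (the paper writes $\tilde g = e^{2\varphi}g$ and takes $e^{-\varphi}\Psi$), invoking the standard conformal covariance of the Dirac operator and then checking that both sides of the curvature equation pick up the same factor $u^{-2}$, so the rescaling factor $f$ is unchanged. Your argument is in fact more careful than the paper's---the paper only compares the pointwise norms $|F_A|$ and $f|\Psi|^2$ rather than the full endomorphism identity, and does not address the $C^{2,\alpha}$ regularity issue you raise.
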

\begin{proof}
We claim that if $(A,\Psi)$ is a solution with respect to $g$, then $(A,
e^{-\varphi}\Psi)$ is a solution with respect to
$\tilde{g}=e^{2\varphi} g$. Mapping an orthonormal frame $\{e_1,e_2,e_3\}$ of $g$ to an orthonormal frame $\{e^{-\varphi}e_1,e^{-\varphi}e_2,e^{-\varphi}e_3\}$ of $\tilde{g}$ gives a global isomorphism of two orthonormal frame bundles and hence a global isometry of the Clifford bundles. Then the identity map between the spinor bundles is an isometry.

For a proof of the Spin$^c$ Dirac
equation, one is referred to \cite{Lawson}, and the curvature
equation is immediate from the fact that
$$|F_A|_{\tilde{g}}=e^{-2\varphi}|F_A|_g=e^{-2\varphi}f|\Psi|_g^2=
f|e^{-\varphi}\Psi|_{\tilde{g}}^2,$$ where $f$ is the rescaling factor.
\end{proof}

\begin{thm}\label{th1}
Let $(M,g)$ be a smooth closed oriented Riemannian $3$-manifold with
$b_1(M)\geq 1$ and $\mathfrak{s}$ be a Spin$^c$ structure on it.
Suppose that it admits a solution for the rescaled  Seiberg-Witten equations for any rescaling.

Then for any
smooth metric $\tilde{g}$ conformal to $g$ and any nonzero
$\omega\in H^1_{DR}(M)$,
\begin{eqnarray*}
\int_M |r_{\tilde{g}}|^2 d\mu_{\tilde{g}}\geq \frac{8\pi^2
|c_1(\mathfrak{s})\cup [\omega]|^2}{\int_M |\omega|_{\tilde{g}}^2\
d\mu_{\tilde{g}}},
\end{eqnarray*}
and the equality holds iff $(M,\tilde{g})$ is a Riemannian submersion onto $S^1$ with totally
geodesic fiber isometric to a compact oriented surface of genus $\geq 1$ with a non-positive constant curvature metric whose volume form is a multiple of $* \omega$ for $\tilde{g}$-harmonic $\omega$, and $[c_1(\mathfrak{s})]$ is a multiple of $[*\omega]$ in $H^2_{DR}(M)$, where $*$ denotes the Hodge star with respect to $\tilde{g}$.
\end{thm}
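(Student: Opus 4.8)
The plan is to deduce the inequality from LeBrun's refined four-dimensional Seiberg-Witten curvature estimate \cite{LB5} applied to the product $N=M\times S^1$, and then to extract the rigidity by saturating every inequality in that argument. First I would fix a smooth $\tilde g\in[g]$ and a nonzero closed $1$-form $\omega$; since $\int_M|\omega|_{\tilde g}^2$ is minimized in its cohomology class by the $\tilde g$-harmonic representative while $c_1(\mathfrak{s})\cup[\omega]$ is unchanged, the harmonic representative gives the largest right-hand side, so it suffices to treat that case and equality will force $\omega$ to be harmonic. Pulling $\mathfrak{s}$ back to $N$ and giving $N$ a (possibly warped) product metric $G$, the dimensional reduction identifies a solution of the $f$-rescaled equations on $(M,\tilde g)$ with an honest solution on $(N,G)$, the factor $f$ encoding the warping; the hypothesis that solutions exist for every rescaling, together with Lemma \ref{lem1}, supplies Seiberg-Witten solutions for every metric in the conformal class $[G]$. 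This is exactly the input LeBrun's estimate consumes, and it replaces the nonvanishing of the Seiberg-Witten invariant used in Theorem \ref{th0}.

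Because the Hodge star on $2$-forms is conformally invariant in dimension four, $(c_1^+)^2$ is constant along $[G]$, so minimizing LeBrun's pointwise estimate over $[G]$ and invoking Lemma \ref{uniq} with $r=2$ turns it into $32\pi^2\,(c_1^+)^2\le \mathfrak{Y}(N,[G])^2=\inf_{[G]}\int_N \mathfrak{S}^2\,d\mu$, the modified Yamabe invariant being nonpositive in our situation. The heart of the computation is the four-to-three dictionary. On a product the curvature of $N$ is pulled back from $M$, and a direct calculation on $\Lambda^+$ shows that the eigenvalues of $W_+$ are $-\tfrac12$ times those of the trace-free Ricci curvature $\rho=r_{\tilde g}-\tfrac{s_{\tilde g}}{3}\tilde g$, whence $\mathfrak{S}_N=s_{\tilde g}-\tfrac{\sqrt6}{2}|\rho|$ up to the warping contributions. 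A pointwise Cauchy-Schwarz inequality then gives $\mathfrak{S}_N^2\le\tfrac92|r_{\tilde g}|^2$, sharp exactly when two eigenvalues of $r_{\tilde g}$ coincide in the right pattern, while on the cohomological side $(c_1^+)^2$ reduces to a multiple of $\int_M|\gamma|^2$ with $*\gamma$ representing $c_1(\mathfrak{s})$, which Cauchy-Schwarz bounds below by $(c_1(\mathfrak{s})\cup[\omega])^2/\int_M|\omega|^2$. Combining these and optimizing over the rescaling $f$ produces the asserted constant $8\pi^2$; I expect the delicate point here to be that constant warpings cancel, so the sharp constant genuinely requires the full freedom in $f$.

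The main obstacle is the equality discussion, which demands that all of the above inequalities be saturated at once. Equality in the cohomological Cauchy-Schwarz forces $\gamma$ to be pointwise proportional to $\omega$ and $\omega$ to be $\tilde g$-harmonic, so $c_1(\mathfrak{s})$ is a multiple of $[*\omega]$; equality in the pointwise Cauchy-Schwarz forces $r_{\tilde g}$ to have two equal eigenvalues with the third determined; the Weitzenb\"ock identity for the Seiberg-Witten solution forces $\nabla_A\Phi=0$ and $|\Phi|$ constant; and the rigidity clause of Lemma \ref{uniq} forces $s_{\tilde g}$ to be a nonpositive constant and the optimal warping to be constant. Translating these conditions to $M$, the distribution orthogonal to $\omega$ is integrable with totally geodesic leaves on which the induced metric has constant nonpositive curvature, and $*\omega$ is a constant multiple of the leaf area form; this is precisely the statement that $(M,\tilde g)$ is a Riemannian submersion onto $S^1$ with totally geodesic fiber a genus $\ge 1$ surface of constant nonpositive curvature whose area form is a multiple of $*\omega$. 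I would finish by checking the converse, exhibiting for each such geometry the rescaled solutions that realize equality.
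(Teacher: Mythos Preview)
Your overall architecture is right and matches the paper's: pass to $N=M\times S^1$, use LeBrun's refined estimate with the modified scalar curvature $\mathfrak{S}=s-\sqrt{6}|W_+|$, and compare against $\int_M|r_{\tilde g}|^2$ via the product dictionary. Your identification $\mathfrak{S}_{G_0}=s_{\tilde g}-\tfrac{\sqrt6}{2}|\rho|$ for the straight product $G_0=\tilde g+dt^2$ and the pointwise bound $\mathfrak{S}^2\le\tfrac{9}{2}|r_{\tilde g}|^2$ are correct, as is the cohomological Cauchy--Schwarz step.

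However, there is a genuine gap at the point where you assert that ``the hypothesis that solutions exist for every rescaling, together with Lemma~\ref{lem1}, supplies Seiberg--Witten solutions for every metric in the conformal class $[G]$.'' This is not true, and it is exactly the obstacle the paper has to work around. The four-dimensional Seiberg--Witten equations are \emph{not} conformally invariant (the Dirac equation is, but the curvature equation fails to be: under $\tilde G=u^2G$ the quadratic map $q(\Phi)$ and the Clifford action on $F_A^+$ scale by different powers of $u$). Lemma~\ref{lem1} is a three-dimensional statement; together with the dimensional reduction it gives you solutions on $N$ only for \emph{$S^1$-invariant} conformal representatives, i.e.\ warped products $h+f^2dt^2$ with $h$ conformal to $\tilde g$. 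A generic metric in $[G_0]$ is not of this form. Since LeBrun's refined inequality $32\pi^2(c_1^+)^2\le\int_N\mathfrak{S}^2$ is obtained by first passing to the modified Yamabe minimizer $\hat g$ (where $\mathfrak{S}$ is constant, so that the $L^2$ and $L^3$ norms match and the two Weitzenb\"ock formulas can be combined), you need a Seiberg--Witten solution \emph{at $\hat g$}. Your ``optimizing over the rescaling $f$'' does not reach $\hat g$ unless you know $\hat g$ is itself a warped product.

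The missing ingredient is precisely this: one must show that the modified Yamabe minimizer $\hat g\in[\tilde g+dt^2]$ is invariant under translation in the $S^1$ factor, hence of the form $h+f^2dt^2$. The paper proves this by exploiting the \emph{uniqueness} clause of Lemma~\ref{uniq}: any $S^1$-translate of $\hat g$ is again a minimizer, so it differs from $\hat g$ by a constant; an integration argument forces that constant to be $1$. Once $\hat g$ is a warped product, the three-dimensional hypothesis (a solution of the $f^{-1}$-rescaled equations on $(M,h)$, available via Lemma~\ref{lem1}) is promoted by an explicit calculation to a genuine four-dimensional solution on $(N,\hat g)$, and LeBrun's argument goes through. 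With this step inserted your outline becomes correct; without it, the inequality $32\pi^2(c_1^+)^2\le\mathfrak{Y}(N,[G_0])^2$ is unproved.
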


%When $[c_1(\mathfrak{s})]\ne 0\in H^2_{DR}(M)$, the equality holds
%iff $(M\times S^1,g+dt^2)$ is a K\"ahler metric of negative constant
%scalar curvature with the K\"ahler form a multiple of
%$$* c_1^h(\mathfrak{s})+c_1^h(\mathfrak{s})\wedge dt,$$ and
%$\omega=\omega^h$ is a multiple of $* c_1^h(\mathfrak{s})$, where
%$*$ and  $(\cdot)^h$ respectively denote the Hodge star and the
%harmonic part with respect to $g$.

%When $[c_1(\mathfrak{s})]=0\in H^2_{DR}(M)$, if we assume that
%$(M,g)$ has a solution of any perturbed Seiberg-Witten equations for
%such $c_1(\mathfrak{s})$, then the equality  holds iff $(M,g)$ is a
%flat manifold $T^3/\Gamma$ such that $(M\times S^1,g+dt^2)$ is
%K\"ahler with the K\"ahler form a multiple of
%$$*_g \omega^h+\omega^h\wedge dt.$$

\begin{proof}
In order to prove the inequality, we may assume $c_1(\mathfrak{s})\ne 0
\in H^2(M,\Bbb R)$. Then the Seiberg-Witten equations have an
irreducible solution for any metric in $[g]$, implying that there
cannot exist a metric in $[g]$ with nonnegative scalar curvature,
and hence there exists a smooth metric in $[g]$ with negative scalar
curvature.

For notational convenience, let $g$ be any smooth metric in $[g]$.
Consider the product metric $g+dt^2$ on $M\times S^1$,where $t\in
[0,1]$ is a global coordinate of $S^1$. By the previous section,
there exists a $C^{2,\alpha}$ metric $\hat{g}\in [g+dt^2]$ which minimizes $\frak{Y}(M\times S^1,[g+dt^2])$ satisfying that $s-\sqrt{6}|W_+|$ is a negative constant.
\begin{lem}
$\hat{g}$ is invariant under the translation along $S^1$-direction.
\end{lem}
\begin{proof}
Let $\hat{g}=(g+dt^2)\psi$ for a positive smooth function $\psi$ on $M\times S^1$, and
we will show $\psi(x,t)=\psi(x,t+c)$ for $(x,t)\in M\times S^1$ for any $c$.

Since $(g+dt^2)\psi(x,t+c)$ is also a minimizer, by Lemma \ref{uniq} there exists a smooth positive function $\varphi$ on $S^1$ such that $$\psi(x,t+c)=\varphi(c)\psi(x,t)$$ for any $(x,t)$. For any $c$,
\begin{eqnarray*}
\int_{M\times S^1}\psi(x,t)\ d\mu_{g+dt^2}&=&\int_{M\times S^1}\psi(x,t+c)\ d\mu_{g+dt^2}\\ &=&\varphi(c)\int_{M\times S^1}\psi(x,t)\ d\mu_{g+dt^2},
\end{eqnarray*}
where the first equality is due to the translation invariance of $dt^2$. Since $\psi>0$, we conclude that $\varphi(c)=1$ for any $c$.
\end{proof}

We write the metric $\hat{g}$ as the warped form $h+f^2 dt^2$ for $f:M\rightarrow \Bbb R^+$ where $h$ is the metric $f^2g$ on $M$. Let
$\{e_1,e_2,e_3,e_4=\frac{\partial}{\partial t}\}$ be a local
orthonormal frame $M\times S^1$ with respect to $h+dt^2$, and
$\{\omega^i|i=1,\cdots,4\}$ its dual coframe. Recall the first Cartan's
structure equations :
$$d\omega^i=-\omega^i_j\wedge \omega^j,$$ where $\omega^i_j$ are the
connection $1$-forms of $h+dt^2$. Obviously $\omega_4^i$ are all zero
for all $i$. Take an orthonormal coframe of $\hat{g}$ as
$\{\omega^1,\omega^2,\omega^3,f\omega^4\}$ and apply the first Cartan's structure equations to
$\{\omega^1,\omega^2,\omega^3,f\omega^4\}$, then one can see that the
connection $1$-forms $\hat{\omega}_j^i$ of $\hat{g}$ are given by
$$\hat{\omega}_j^i=\omega^i_j, \qquad \textrm{for } i,j=1,2,3$$
$$\hat{\omega}_j^4=\frac{\partial f}{\partial e_j}\omega^4 \qquad \textrm{for } j=1,2,3.$$

Let $(A,\Phi)$ be a solution of the $\frac{1}{f}$-rescaled Seiberg-Witten equations for
$\mathfrak{s}$ on $(M,h)$, whose existence is guaranteed by Lemma
\ref{lem1}. Then it is a translation-invariant solution of the
Seiberg-Witten equations for $\mathfrak{s}$ on $(M\times
S^1,h+dt^2)$.

We claim that $(A, \frac{\Phi}{\sqrt{f}})$ is a solution of the unrescaled
Seiberg-Witten equations for $\mathfrak{s}$ on $(M\times
S^1,h+f^2dt^2)$. Let's denote the objects of $\hat{g}=h+f^2dt^2$
corresponding to that of $h+dt^2$ by $\hat{\cdot}$. The Spin$^c$
Dirac equation reads
\begin{eqnarray*}
\hat{D}_{A}(f^{-\frac{1}{2}}\Phi) &=&\sum_{i=1}^4\hat{e}_i\hat{\nabla}_{\hat{e}_i}(f^{-\frac{1}{2}}\Phi)\\
&=&\sum_{i=1}^4\hat{e}_i(\frac{\partial}{\partial\hat{e}_i}(f^{-\frac{1}{2}}\Phi)
+\frac{1}{2}(\sum_{j<k}\hat{\omega}^k_j(\hat{e}_i)\hat{e}_j\hat{e}_k+A(\hat{e}_i))f^{-\frac{1}{2}}\Phi)\\
&=&\sum_{i=1}^3e_i(\frac{\partial}{\partial e_i}(f^{-\frac{1}{2}}\Phi) +\frac{1}{2}(\sum_{j<k\leq
3}{\omega}^k_j(e_i)e_je_k+A(e_i))f^{-\frac{1}{2}}\Phi)\\
& & +\frac{\hat{e}_4}{2}(\sum_{j=1}^3\frac{\partial f}{\partial
e_j}\omega^4(\hat{e}_4)e_j\hat{e}_4)f^{-\frac{1}{2}}\Phi\\
&=&\sum_{i=1}^3e_i(-\frac{f^{-\frac{3}{2}}}{2}\frac{\partial f}{\partial e_i}   \Phi +f^{-\frac{1}{2}}\frac{\partial\Phi}{\partial e_i} +\frac{1}{2}(\sum_{j<k\leq
3}{\omega}^k_j(e_i)e_je_k+A(e_i))f^{-\frac{1}{2}}\Phi)\\
&& +\frac{1}{2}(\sum_{j=1}^3\frac{\partial f}{\partial
e_j}\frac{1}{f}e_j)f^{-\frac{1}{2}}\Phi\\
&=& f^{-\frac{1}{2}}D_A\Phi\\ &=& 0,
\end{eqnarray*}
and the curvature equation reads
\begin{eqnarray*}
F_{A}^{\hat{+}}
&=&\frac{1}{2}(F_{A}+(*_hF_{A})\wedge\hat{\omega}^4)\\ &\simeq&
\frac{1}{2}(F_{A}+(*_hF_{A})\wedge{\omega}^4)\\ &=&
\frac{1}{f}(\Phi\otimes\Phi^*-\frac{|\Phi|^2}{2}\textrm{Id})\\ &=& (f^{-\frac{1}{2}}\Phi)\otimes (f^{-\frac{1}{2}}\Phi)^*-\frac{|f^{-\frac{1}{2}}\Phi|^2}{2}\textrm{Id},
\end{eqnarray*}
where the equivalence in the second line means the identification as
an endomorphism of the plus spinor bundle. (Mapping an orthonormal frame $\{e_1,\cdots,e_4\}$ of $h+dt^2$ to an orthonormal frame $\{e_1,\cdots,e_3,\hat{e}_4\}$ of $h+f^2dt^2$ gives a global isomorphism of two orthonormal frame bundles, inducing a global isometry of the Clifford bundles. Then the identity map between the spinor bundles is an isometry.)

\begin{lem}
$$\int_{M\times S^1} (\frac{2}{3}s_{g+dt^2}-2\sqrt{\frac{2}{3}}|W_+|_{g+dt^2})^2d\mu_{g+dt^2}
\geq 32\pi^2((\pi^*c_1)^+)^2,$$ where $(\pi^*c_1)^+$ is the
self-dual harmonic part of $\pi^*c_1$ with respect to $g+dt^2$, and
$\pi : M\times S^1\rightarrow M$ is the projection map.
\end{lem}
\begin{proof}
This immediately follows from LeBrun's method of Theorem 2.2 in \cite{LB3}. First by using Lemma \ref{uniq},
\begin{eqnarray*}
\int_{M\times S^1} (\frac{2}{3}s_{g+dt^2}-2\sqrt{\frac{2}{3}}|W_+|_{g+dt^2})^2d\mu_{g+dt^2}
&\geq& \int_{M\times S^1} (\frac{2}{3}s_{\hat{g}}-2\sqrt{\frac{2}{3}}|W_+|_{\hat{g}})^2d\mu_{\hat{g}},
\end{eqnarray*}
and the RHS is equal to
\begin{eqnarray}\label{3rdpower}
(\int_{M\times S^1}d\mu_{\hat{g}})^{\frac{1}{3}}
(\int_{M\times S^1} |\frac{2}{3}s_{\hat{g}}-2\sqrt{\frac{2}{3}}|W_+|_{\hat{g}}|^3d\mu_{\hat{g}})^{\frac{2}{3}},
\end{eqnarray}
because $\hat{g}$ has constant $\frac{2}{3}s-2\sqrt{\frac{2}{3}}|W_+|$.
Now we use the fact that $(M\times S^1,\hat{g})$ admits a solution of the unrescaled Seiberg-Witten equations for $\mathfrak{s}$. Combining its Weitzenb\"ock formula with the Weitzenb\"ock formula for the self-dual harmonic 2-forms, we conclude that (\ref{3rdpower}) is greater than or equal to
$32\pi^2((\pi^*c_1)^+)^2.$
\end{proof}
Now using the above lemma, we get
\begin{eqnarray}
\int_M |r_g|^2\ d\mu_g&=&\int_{M\times S^1} |r_{g+dt^2}|^2\
d\mu_{g+dt^2}\nonumber \\
&=& 8\int_{M\times
S^1}(\frac{s_{g+dt^2}^2}{24}+\frac{1}{2}|W_+|_{g+dt^2}^2)\
d\mu_{g+dt^2}\nonumber\\ & &
-8\pi^2(2\chi+3\tau)(M\times S^1)\nonumber\\
&\geq& \frac{1}{2}\int_{M\times S^1} (\frac{2}{3}s_{g+dt^2}-2\sqrt{\frac{2}{3}}|W_+|_{g+dt^2})^2
d\mu_{g+dt^2}-0\label{eq1}\\
&\geq& 16\pi^2((\pi^*c_1)^+)^2 \label{eq2} \\
&\geq&\frac{8\pi^2 |c_1\cup [\omega]|^2}{\int_M |\omega|_g^2\
d\mu_g} \label{eq3},
\end{eqnarray}
where the second equality is due to the $4$-dimensional
Chern-Gauss-Bonnet theorem, and the first inequality is simple
applications of H\"older inequality which was proved in LeBrun
\cite{LB4}.

\begin{lem}
The equality of the theorem statement holds iff $(M\times
S^1,g+dt^2)$ is a K\"ahler manifold of non-positive constant scalar
curvature with the K\"ahler form a multiple of $*\omega+\omega\wedge dt$ for harmonic $\omega$, and $[c_1(\mathfrak{s})]$ is a multiple of
$[*\omega]$ in $H^2_{DR}(M)$.
\end{lem}
\begin{proof}
Let's first consider the case when $[c_1]\ne 0\in H^2_{DR}(M)$. It
is shown in \cite{LB4} that both equalities in (\ref{eq1}) and
(\ref{eq2}) hold iff $g+dt^2$ is a K\"ahler metric of negative
constant scalar curvature with the K\"ahler form a multiple of
$(\pi^*c_1)^+$. The equality in (\ref{eq3}) holds iff
$$\omega=\omega^h=* c_1^h,$$ where $(\cdot)^h$ denotes the
$g$-harmonic part.

When $[c_1]= 0\in H^2_{DR}(M)$, the equality implies that the metric
is Ricci-flat.(In fact, it's a flat manifold $T^3/\Gamma$, because
the dimension is 3.) By the Weitzenb\"ock formula for $1$-forms,
$\omega^h$ is a nonzero parallel $1$-form. Then
$*\omega^h+\omega^h\wedge dt$ is a nonzero parallel $2$-form on
$(M\times S^1,g+dt^2)$, and hence a K\"ahler form with the obvious
complex structure compatible with the orientation. Conversely
suppose that $(M\times S^1,g+dt^2)$ is scalar-flat K\"ahler. Since a
K\"ahler curvature is a (symmetric) section of $\wedge^{1,1}\otimes
\wedge^{1,1}$, on any scalar-flat K\"ahler surface the Riemann
curvature restricted to self-dual two forms must be zero, and hence
so is $W_+$. Then by the $4$-dimensional Chern-Gauss-Bonnet theorem
\begin{align*}
\int_{M\times S^1}|r_{g+dt^2}|^2 d\mu_{g+dt^2}&=\int_{M\times
S^1}(\frac{1}{3}(s_{g+dt^2})^2+4|W_+|_{g+dt^2}^2) d\mu_{g+dt^2}\\
&\ \ \ \ -8\pi^2(2\chi+3\tau)(M\times S^1)\\ &=0,
\end{align*}
giving the equality.
\end{proof}

If the equality holds, we have a parallel splitting of $TM$ by $\omega$ and $*\omega$, each of which gives a manifold of constant scalar curvature of dimension 1 and 2 respectively by the above lemma. Thus the universal cover of $(M,g)$ is isometric to  $\Bbb H^2\times \Bbb R^1$ or $\Bbb R^3$. In the first case, $M$ is a quotient by a discrete subgroup of $PSL(2,\Bbb R)\times \Bbb Z$, and in the second case, $M\times S^1$ is a complex torus or a bielliptic surface by the Enriques-Kodaira classification. Therefore $(M,g)$ is obtained by  identifying two boundaries of $\Sigma\times [0,1]$ by an orientation-preserving isometry of  a compact Riemann surface $(\Sigma, g_{c})$ of genus $\geq 1$ with a constant curvature metric $g_c$. Then $(M,g)$ is locally a Riemannian product of $(\Sigma,g_{c})$ and $S^1$, i.e. a Riemannian submersion onto $S^1$ with totally geodesic fiber $(\Sigma, g_{c})$.

Conversely, suppose that $(M,g)$ is such an oriented Riemannian submersion $\pi: M\rightarrow S^1$. The metric being locally a product,  $(M\times S^1,g+dt^2)$ is a K\"ahler manifold with an obvious complex structure and a K\"ahler form $d\Omega+\pi^*ds\wedge dt$ where $d\Omega$ is the volume form of $(\Sigma, g_{c})$ and $ds$ is the volume form of the base. The scalar curvature of $g$ is just the constant scalar curvature of $g_c$ and the first Chern class of $\Sigma$ is a
multiple of $d\Omega$ which is equal to $*\pi^*ds$. By the above lemma, $(M,g)$ attains the equality, thereby completing the proof.
\end{proof}

\begin{rmk}
As seen in the proof, the condition that $(M,g)$ admits a solution for the rescaled  Seiberg-Witten equations for any rescaling is superfluous. It is enough to suppose that $(M\times S^1, \hat{g})$ has a solution for the unrescaled  Seiberg-Witten equations.

In particular, if  $(M,g)$ has an isometric $G$-action which can be lifted to $\mathfrak{s}$ for a compact Lie group, then it is enough for $M$ to have solutions of the Seiberg-Witten equations for any $G$-invariant metric. In this case, $c_1(\mathfrak{s})$ is called a \emph{$G$-monopole class}, and a $G$-monopole class sometimes exists even when the ordinary Seiberg-Witten invariant vanishes. This will be dealt with in our forthcoming paper (\cite{sung-forth}).
\end{rmk}

\begin{rmk}
It is well-known that on a compact K\"ahler surface $(X, g, \varpi)$ with $b_2^+(X)=1$, the Seiberg-Witten invariant of its canonical Spin$^c$ structure for a small perturbation and a Riemannian metric $\tilde{g}\in [g]$ is equal to $\pm 1$, if $\deg(c_1(X)):=[c_1(X)]\cdot [\varpi]$ is negative (See \cite{morgan}), and also true for any perturbation and Riemannian metric if $c_1(X)$ is a torsion by using the celebrated Taubes's theorem \cite{taubes} and the wall crossing formula \cite{w1,w2}.

\end{rmk}

Now let's discuss some immediate implications of the above theorem. First, by taking $\omega$ to be $*c_1^h(\mathfrak{s})$ where $(\cdot)^h$ denotes the harmonic part, we have
\begin{eqnarray*}
\int_M |r_{\tilde{g}}|^2 d\mu_{\tilde{g}}\geq 8\pi^2
\int_M |c_1^h(\mathfrak{s})|_{\tilde{g}}^2\ d\mu_{\tilde{g}}.
\end{eqnarray*}
More interestingly  we can get a lower bound of $L^2$-norm of a harmonic 1-form on $M$ :
\begin{cor}
Under the same hypothesis as Theorem \ref{th1},  if $\tilde{g}$ is not flat,
\begin{eqnarray*}
(\int_M |\omega|_{\tilde{g}}^2\
d\mu_{\tilde{g}})^{\frac{1}{2}}\geq \frac{2\sqrt{2}\pi
|\alpha\cup [\omega]|}{(\int_M |r_{\tilde{g}}|^2 d\mu_{\tilde{g}})^{\frac{1}{2}}},
\end{eqnarray*}
where $\alpha$ is a convex combination of any two such $c_1(\mathfrak{s})$'s.
\end{cor}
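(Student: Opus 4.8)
The plan is to derive the corollary directly from Theorem \ref{th1} by exploiting the convexity of the pairing $\beta \mapsto |\beta \cup [\omega]|$. First I would rewrite the fundamental estimate of Theorem \ref{th1}: for any single admissible Spin$^c$ structure $\mathfrak{s}$ the inequality $\int_M |r_{\tilde{g}}|^2\, d\mu_{\tilde{g}} \ge 8\pi^2 |c_1(\mathfrak{s})\cup[\omega]|^2 / \int_M |\omega|_{\tilde{g}}^2\, d\mu_{\tilde{g}}$ can, after clearing denominators and taking square roots, be put in the symmetric product form
$$\left(\int_M |r_{\tilde{g}}|^2\, d\mu_{\tilde{g}}\right)^{\frac{1}{2}}\left(\int_M |\omega|_{\tilde{g}}^2\, d\mu_{\tilde{g}}\right)^{\frac{1}{2}} \ge 2\sqrt{2}\,\pi\,|c_1(\mathfrak{s})\cup[\omega]|,$$
using $\sqrt{8\pi^2}=2\sqrt{2}\,\pi$. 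This already yields the stated inequality in the special case $\alpha = c_1(\mathfrak{s})$, provided the Ricci factor is nonzero so that the quotient makes sense.

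Next I would dispose of the non-flatness hypothesis, which is exactly what guarantees that the denominator appearing in the corollary does not vanish. In dimension three the Weyl tensor vanishes identically, so the full curvature is determined by the Ricci tensor; hence $\tilde{g}$ is flat if and only if $r_{\tilde{g}}\equiv 0$, i.e. if and only if $\int_M |r_{\tilde{g}}|^2\, d\mu_{\tilde{g}}=0$. Thus assuming $\tilde{g}$ not flat is precisely the condition that makes $\left(\int_M |r_{\tilde{g}}|^2\, d\mu_{\tilde{g}}\right)^{\frac{1}{2}}>0$, so that dividing across by it is legitimate.

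The key step is to upgrade from a single Chern class to a convex combination. Given two Spin$^c$ structures $\mathfrak{s}_1,\mathfrak{s}_2$ satisfying the hypotheses of Theorem \ref{th1} and $t\in[0,1]$, set $\alpha = t\,c_1(\mathfrak{s}_1) + (1-t)\,c_1(\mathfrak{s}_2)$. Since cup product with $[\omega]$ followed by evaluation on the fundamental class is a linear functional, the triangle inequality gives $|\alpha\cup[\omega]| \le t\,|c_1(\mathfrak{s}_1)\cup[\omega]| + (1-t)\,|c_1(\mathfrak{s}_2)\cup[\omega]|$. Writing $P$ for the common product $\left(\int_M |r_{\tilde{g}}|^2\, d\mu_{\tilde{g}}\right)^{\frac{1}{2}}\left(\int_M |\omega|_{\tilde{g}}^2\, d\mu_{\tilde{g}}\right)^{\frac{1}{2}}$, the product form of Theorem \ref{th1} applied to $\mathfrak{s}_1$ and $\mathfrak{s}_2$ gives $2\sqrt{2}\,\pi\,|c_1(\mathfrak{s}_i)\cup[\omega]|\le P$ for $i=1,2$, whence
$$2\sqrt{2}\,\pi\,|\alpha\cup[\omega]| \le t\,P + (1-t)\,P = P.$$
Dividing by the positive quantity $\left(\int_M |r_{\tilde{g}}|^2\, d\mu_{\tilde{g}}\right)^{\frac{1}{2}}$ and rearranging produces the claimed estimate for $\alpha$.

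I do not expect a genuine analytic obstacle here, since all the substantive work is already encoded in Theorem \ref{th1}; the only conceptual point worth stressing is that the estimate survives passage to the convex hull even though $\alpha$ need not be the first Chern class of any Spin$^c$ structure, and indeed need not even be an integral class. This is precisely because $\beta \mapsto |\beta\cup[\omega]|$ is a seminorm, hence convex, while the upper bound $P$ furnished by Theorem \ref{th1} is independent of $\beta$; the interaction of these two facts is the whole content of the corollary.
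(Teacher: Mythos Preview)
Your proposal is correct and is precisely the derivation the paper has in mind; the corollary is stated there without proof as an immediate consequence of Theorem~\ref{th1}, and your rearrangement into product form together with the triangle inequality for the convex combination fills in exactly the routine steps. Your handling of the non-flatness hypothesis (via the vanishing of the Weyl tensor in dimension three) is also the intended reason the denominator is nonzero.
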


\section{Monopole classes on connected sums}
\setcounter{equation}{0}

Our curvature estimates  provide an easy toolkit in the study of a closed 3-manifold $M$ with a non-torsion monopole.  In \cite{sung}, we derived the inevitability of collapsing when such a manifold has zero Yamabe invariant which implies the existence of a sequence of unit-volume Riemannian metrics $\{g_i\}$ on $M$ satisfying $\inf_i \int_M s_{g_i}^2d\mu_{g_i}=0.$ We also found a necessary condition to be a monopole class in a specific example as follows :
\begin{prop}[\cite{sung}]\label{th2}
Let $M$ be a closed oriented $3$-manifold which fibers over the
circle with a periodic monodromy, and $N$ be a closed oriented
$3$-manifold with $b_1(N)=0$.

Then the rational part of a monopole class, if any, of $M\#
N$ is of the form $m [F]$ for an integer $m$ satisfying $|m|\leq
|\chi(F)|$, where $\chi(F)$ is the Euler characteristic of the fiber $F$.
\end{prop}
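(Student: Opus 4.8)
The plan is to run the scalar curvature inequality of Theorem \ref{th0} — which needs only a Seiberg--Witten solution for the given metric, and hence applies to a monopole class for \emph{every} metric — against the locally homogeneous metric that periodic monodromy forces on $M$. First I would make the homological reduction. Since $b_1(N)=0$, Poincar\'e duality gives $b_2(N)=0$, so $H^1(M\#N;\mathbb{R})\cong H^1(M;\mathbb{R})$ and $H^2(M\#N;\mathbb{R})\cong H^2(M;\mathbb{R})$; thus the rational part $c$ of a monopole class of $M\#N$ lies in $H^2(M;\mathbb{Q})$, and every class used below has a representative supported in the $M$--summand. The Wang sequence of $F\hookrightarrow M\to S^1$ gives $H^1(M;\mathbb{Q})=\mathbb{Q}[dt]\oplus H^1(F)^{\phi}$ and $H^2(M;\mathbb{Q})=\big(H^1(F)^{\phi}\cup[dt]\big)\oplus\mathbb{Q}\,v$, where $\phi$ is the finite order monodromy, $[dt]$ is pulled back from $S^1$, and $v$ is the vertical generator of the $H^2(F)$--summand (the class written $[F]$ in the statement). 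Write $c=m\,v+h$ with $h$ in the horizontal summand. Because $\phi$ is periodic, $M$ carries the locally product metric $g_0=g_\Sigma+dt^2$ realizing it as a Riemannian submersion over $S^1$ with totally geodesic constant curvature fibre $(F,g_\Sigma)$ — precisely the equality geometry of Theorem \ref{th1}.

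To bound $m$ I would evaluate Theorem \ref{th0} against $\omega_0:=[dt]$. Using $[dt]\cup[dt]=0$ one gets $\langle c\cup\omega_0,[M]\rangle=\pm m$ independently of $h$. Now assemble $M\#N$ from a Gromov--Lawson neck of positive scalar curvature joining the fixed metric $g_0$ on $M$ to a metric on $N$ scaled up so that $\int_N (s_-)^2\,d\mu\to0$; the neck contributes nothing to $\int(s_-)^2$, and $\omega_0$ is supported on the $M$--summand, so in the limit $\int_{M\#N}(s_-)^2\,d\mu\to\int_M s_{g_0}^2\,d\mu$ and $\int_{M\#N}|\omega_0|^2\,d\mu\to\mathrm{Vol}(M,g_0)$. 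Since $g_0$ is a product with flat $S^1$, $s_{g_0}$ equals the constant fibre scalar curvature $2K$, so Theorem \ref{th0} reads $4K^2\,\mathrm{Area}(F)\ge 16\pi^2 m^2/\mathrm{Area}(F)$; combined with the Gauss--Bonnet identity $|K|\,\mathrm{Area}(F)=2\pi|\chi(F)|$ this gives $|m|\le|\chi(F)|$. When $F$ has genus $\le 1$ the metric $g_0$ satisfies $s_{g_0}\ge 0$, so $\int(s_-)^2=0$ and the same inequality forces $m=0$, consistent with $|\chi(F)|\in\{2,0\}$.

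Finally I would show $h=0$, so that $c=m\,v=m[F]$. For a horizontal class take $\omega\in H^1(F)^{\phi}\subset H^1(M)$ with its $g_\Sigma$--harmonic representative (no $dt$ component), and run the estimate on $M\#N$ while collapsing the circle, using $g_0^{(L)}=g_\Sigma+L^2dt^2$ with $L\to0$ on the $M$--summand and a fixed metric on the neck and $N$. Then $|\omega|^2_{g_0^{(L)}}=|\omega|^2_{g_\Sigma}$ is unchanged, so $\int_{M\#N}|\omega|^2\,d\mu=L\int_F|\omega|^2_{g_\Sigma}\,dA+o(1)\to0$, whereas $\int_{M\#N}(s_-)^2\,d\mu$ stays bounded (the $M$--part tends to $0$ and the neck/$N$--part is a fixed constant). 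Theorem \ref{th0} then forces $c\cup\omega=0$. A cup product computation identifies $\langle c\cup\omega,[M]\rangle$ with the value, on $h$ and $\omega$, of the intersection form of $F$ restricted to $H^1(F)^{\phi}$; since $\phi$ is an orientation preserving finite order diffeomorphism this restricted form is nondegenerate, so $c\cup\omega=0$ for all horizontal $\omega$ yields $h=0$.

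The main obstacle is analytic rather than formal. One must construct the connected--sum metrics so that the Gromov--Lawson neck genuinely contributes nothing (or a uniformly bounded amount) to $\int(s_-)^2$ while a closed representative of the relevant $\omega$ remains supported away from the neck, and one must check that collapsing the global $S^1$ factor does not destroy the positivity of the neck. Controlling these pieces uniformly along the two limits, $\mathrm{Vol}(N)\to\infty$ in the second paragraph and $L\to 0$ in the third, is the only delicate point; the remainder is the cup product bookkeeping and Gauss--Bonnet above.
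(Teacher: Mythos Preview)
Your proposal is correct and follows essentially the same architecture as the paper's argument (the paper does not prove Proposition~\ref{th2} itself, citing \cite{sung}, but the proof of the generalization immediately following it specializes to this case). Both arguments run Theorem~\ref{th0} against a family of connected-sum metrics: for the ``horizontal'' classes $\omega\in H^1(F)^\phi$ one collapses the $S^1$-direction so that $\int|\omega|^2\to 0$ while $\int(s_-)^2$ stays bounded, forcing $c\cup[\omega]=0$; your nondegeneracy argument for the restricted intersection form on $H^1(F)^\phi$ is the cohomological bookkeeping that the paper leaves implicit. The one genuine difference is the bound on $m=\langle c,[F]\rangle$. The paper simply invokes the adjunction inequality $|\langle\alpha,[F]\rangle|\le -\chi(F)$ as a black box, whereas you rederive it in this special situation by feeding $\omega_0=[dt]$ into Theorem~\ref{th0} on the locally product metric $g_0$ and reading off $|K|\,\mathrm{Area}(F)\ge 2\pi|m|$ via Gauss--Bonnet. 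Your route is more self-contained (everything reduces to the single scalar-curvature estimate of Theorem~\ref{th0}) and incidentally gives the sharper conclusion $m=0$ when $g(F)\le 1$; the paper's route is shorter once adjunction is available and generalizes immediately to several summands $M_i$. The analytic caveat you flag---controlling the Gromov--Lawson neck uniformly along the two limits---is exactly the point where the paper, too, defers to \cite{GL,sung1,sung11} rather than giving details.
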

(N.B. : In the statement of Theorem 1.4 of \cite{sung}, $b_1(N)=0$ is missing  by mistake.)

We give a generalization of this to connected sums :
\begin{prop}
Let $M_i$ for $i=1,\cdots , n$ be a closed oriented $3$-manifold which fibers over the
circle with a periodic monodromy, and $N$ be any closed oriented $3$-manifold.

Then the rational part of a monopole class, if any,  of $M_1\# \cdots \# M_n\# N$ is of the form $$\beta+\sum_{i=1}^n m_i [F_i]$$ for $\beta\in H^2(N,\Bbb Z)$ and an integer $m_i$ satisfying $|m_i|\leq -\chi(F_i)$, where $\chi(F_i)$ is the Euler characteristic of the fiber $F_i$ in $M_i$.
\end{prop}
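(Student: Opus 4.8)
The plan is to exploit the fact that a monopole class forces the inequality of Theorem \ref{th1} for \emph{every} Riemannian metric on $X:=M_1\#\cdots\#M_n\#N$, and to feed into it metrics that degenerate onto the individual fibered summands. First I set up the cohomological bookkeeping: since $X$ is a connected sum of oriented $3$-manifolds, $H^2(X;\mathbb{Q})\cong\bigoplus_i H^2(M_i;\mathbb{Q})\oplus H^2(N;\mathbb{Q})$, and cup products between classes coming from distinct summands vanish, since they admit representatives with disjoint support. Writing the rational part of the monopole class as $c=\beta+\sum_i c_i$ with $\beta\in H^2(N;\mathbb{Z})$ and $c_i\in H^2(M_i;\mathbb{Q})$, the summand $\beta$ is to be left unconstrained, so it only remains to show $c_i=m_i[F_i]$ with $|m_i|\le-\chi(F_i)$. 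Because the monodromy of $M_i$ is periodic it is, by the classical realization of a finite-order mapping class as an isometry, carried by a constant-curvature metric $g_{\Sigma_i}$ on the fiber $F_i$; the associated mapping-torus metric makes $(M_i,g_i)$ a Riemannian submersion over $S^1$ with totally geodesic fiber $(F_i,g_{\Sigma_i})$ -- precisely the equality geometry of Theorem \ref{th1} -- and I am free to rescale the length $L_i$ of the base circle and the fiber independently. If some $F_i$ is a sphere, then $M_i$ carries positive scalar curvature; dilating the remaining summands and the necks then drives $\int_X(s_-)^2\to0$, so the scalar-curvature inequality of Theorem \ref{th0} forces $c_i=0$, and I may assume from now on that every $F_i$ has genus at least $1$, where $-\chi(F_i)=|\chi(F_i)|$.

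To apply Theorem \ref{th1} I only need, by the Remark following it, that $(X\times S^1,\hat g)$ carries an ordinary Seiberg-Witten solution for the relevant minimizer metric $\hat g$, and this is provided by the monopole class of $X$ together with the conformal invariance of Lemma \ref{lem1}. For each fixed $j$ I build a family of metrics on $X$ by equipping $M_j$ with $g_j$ (geometric, base circle of length $L_j$), equipping every other summand with a fixed background metric, and joining the summands along standard connected-sum necks, allowing the other summands and the necks to be dilated by a factor $\lambda$. I then test the inequality against two kinds of harmonic $1$-forms pulled back from $M_j$: the fibration form $\alpha_j$, the normalized pull-back of $d\theta$ from the base circle, and the forms $\gamma_a$ obtained from a basis of the $\phi$-invariant harmonic $1$-forms of $(F_j,g_{\Sigma_j})$, which descend to $M_j$ and represent exactly the ``cross'' classes in $H^1(F_j)^{\phi}\subset H^2(M_j;\mathbb{Q})$.

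The two tests isolate the two halves of the conclusion. Testing with $\omega=\gamma_a$ and shrinking $L_j\to0$ keeps $\int_X|r|^2$ bounded while $\int_X|\gamma_a|^2=L_j\int_{F_j}|\gamma_a|^2_{g_{\Sigma_j}}\to0$; since $c\cup\gamma_a=c_j\cup\gamma_a$ is a fixed topological number, the inequality $\int_X|r|^2\cdot\int_X|\gamma_a|^2\ge 8\pi^2(c\cup\gamma_a)^2$ forces $c_j\cup\gamma_a=0$ for every $a$. As the cup pairing is nondegenerate on the $\phi$-invariant subspace of $H^1(F_j)$, this kills all cross terms and yields $c_j=m_j[F_j]$. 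Testing next with $\omega=\alpha_j$ and letting $\lambda\to\infty$ dilates away the Ricci energy of the necks and of the other summands ($\int|r|^2$ scales like $\lambda^{-1}$ on a dilated region in dimension $3$) while the $M_j$-supported form $\alpha_j$ retains its $L^2$-norm, so that $\int_X|r|^2\to\int_{M_j}|r_{g_j}|^2$ and $\int_X|\alpha_j|^2\to\int_{M_j}|\alpha_j|^2$. On the product geometry $F_j\times S^1$ one computes $\int_{M_j}|r_{g_j}|^2=8\pi^2\chi(F_j)^2 L_j/\mathrm{Area}(F_j)$ and $\int_{M_j}|\alpha_j|^2=\mathrm{Area}(F_j)/L_j$, whose product is $8\pi^2\chi(F_j)^2$, while $c\cup\alpha_j=m_j$. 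The inequality therefore gives $m_j^2\le\chi(F_j)^2$, i.e. $|m_j|\le-\chi(F_j)$, completing the proof.

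The step I expect to be the genuine obstacle is the convergence analysis underlying both limits: one must verify that, as the necks are dilated or the base circle is collapsed, the $L^2$-harmonic representatives of $\alpha_j$ and $\gamma_a$ really do concentrate on $M_j$ and converge to the corresponding harmonic forms of $(M_j,g_j)$, and that the neck and gluing regions contribute negligibly to $\int_X|r|^2$ in the relevant regime. Making these degenerations precise -- a controlled neck-stretching and collapsing argument with uniform control of harmonic forms and of the curvature integral -- is where the real work lies; the remaining cohomological identities and the curvature computations on $F_j\times S^1$ are exactly those already used for Proposition \ref{th2} and present no new difficulty.
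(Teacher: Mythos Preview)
Your approach rests on applying the Ricci estimate of Theorem~\ref{th1}, but the hypothesis of the Proposition is only that $c_1(\mathfrak{s})$ is a \emph{monopole class}, not a \emph{strong} monopole class. You try to close this gap via the Remark following Theorem~\ref{th1}, arguing that a monopole class on $X$ plus ``conformal invariance of Lemma~\ref{lem1}'' yields a solution on $(X\times S^1,\hat g)$. This does not work. Lemma~\ref{lem1} is a $3$-dimensional statement; what you would need is conformal invariance of the unrescaled $4$-dimensional Seiberg--Witten equations, so that the translation-invariant solution on $(X\times S^1,g+dt^2)$ transports to the conformally related warped metric $\hat g=h+f^2dt^2$. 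But in dimension~$4$ the Dirac equation forces the spinor to rescale with conformal weight $-\tfrac{3}{2}$ while the curvature equation forces weight $-1$, so no single rescaling works. Indeed, this failure is \emph{exactly} why the paper introduces the $f$-rescaled equations and the notion of strong monopole class in the first place: the rescaling by $1/f$ on $(M,h)$ is what is needed to produce a solution on $(M\times S^1,\hat g)$, and whether a mere monopole class suffices for the Ricci estimate is stated in the Introduction as an open conjecture. So as written your argument proves the Proposition only for strong monopole classes.

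The paper's proof sidesteps this entirely by using the two estimates that \emph{are} available for monopole classes. For the ``cross'' classes $[\sigma]\in H^1(F_i)^{f_i}$ it uses the scalar-curvature inequality of Theorem~\ref{th0}: one chooses an $f_i$-invariant constant-curvature metric on $F_i$, shrinks the base circle to radius $\varepsilon$ (so $\int(s_-)^2<\varepsilon$ on $M_i$), picks a compactly supported representative of $\sigma$ away from the Gromov--Lawson surgery region, and glues; then $\int_X|\omega|^2\le C\varepsilon$ while $\int_X(s_-)^2$ stays bounded, forcing $\alpha\cup[\omega]=0$. For the bound $|m_i|\le-\chi(F_i)$ the paper simply invokes the adjunction inequality $|\langle\alpha,[F_i]\rangle|\le-\chi(F_i)$. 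Besides using only the weaker hypothesis, this route is also technically lighter: because the $1$-form representatives are chosen with support disjoint from the necks, the ``convergence analysis of harmonic forms under neck-stretching'' that you correctly identify as the hard step in your scheme is never needed.
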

\begin{proof}
Let $\alpha$ be a monopole class of $X=M_1\# \cdots \# M_n\# N$.

First if any $F_i$ is a 2-sphere, then the only possibility for $M_i$ is $S^1\times S^2$. Letting $[\omega]$ be the Poincar\'e-dual of $F_i$, we only have to show that it pairs zero with $\alpha$. Let $0< \varepsilon \ll 1$.

 Take a metric of positive scalar curvature on the $M_i$. For the connected sum, we take a small ball on $M_i$, and  a representative $\omega$ of $[\omega]$ to be supported outside of that ball. Then perform the Gromov-Lawson type surgery \cite{GL,sung1, sung11} on it keeping the positivity of scalar curvature to get a compact manifold $M_i'$ with a cylindrical end. And then contract it small enough so that  $$\int_{M_i'} |\omega|^2 d\mu\leq \varepsilon.$$

On the other part of $X$, we put any metric such that it satisfies
\begin{eqnarray}\label{eqn1}
\int (s_-)^2d\mu < 1,
\end{eqnarray}
 and perform the Gromov-Lawson surgery such that the cylindrical end matches with that of the above-made $M_i'$ while still satisfying (\ref{eqn1}).

After gluing these two pieces, we have that
$$\int_{X} |\omega|^2 d\mu\leq \varepsilon\ \ \ \ \textrm{ and } \ \ \ \ \int_X (s_-)^2d\mu < 1.$$
Applying Theorem \ref{th0}, we get $$|4\pi \alpha\cup [\omega]|^2 < \varepsilon,$$ which proves $\alpha\cup [\omega]=0$.

Secondly, let's consider the case of $M_i$ with $\chi(F_i)\leq 0$. Let $[\omega]\in H^1(M_i,\Bbb R)$. By the Mayer-Vietoris principle, $H^1(M_i,\Bbb R)$ is generated by $$\pi_i^*dt,\ \ \  \textrm{ and }\ \ \  \{[\sigma]\in H^1(F_i,\Bbb R) |\ f_i^*[\sigma]= [\sigma]\},$$ where $\pi_i:M_i\rightarrow S^1$ is the projection map and $f_i$ is the monodromy diffeomorphism.

When $[\omega]$ is one of the latter ones, we have to show that it pairs zero with $\alpha$. We can express $\omega$ as
\begin{eqnarray*}
\frac{1}{d_i}\sum_{n=1}^{d_i}(f_i^n)^*\sigma
\end{eqnarray*}
for such  $\sigma$ satisfying $[f^*\sigma]=[\sigma]$, where $d_i$ is the order of $f_i$.

By taking a $f_i$-invariant metric on $F_i$, we can put a locally-product metric on $M_i$ such that $\pi_i$ is a Riemannian submersion with totally geodesic fibers onto a circle of radius $\varepsilon$ and
\begin{eqnarray}\label{eqn2}
\int (s_-)^2d\mu < \varepsilon,
\end{eqnarray}
 We can take a small simply-connected open set $B$ in $F_i$, which is invariant under $f_i$, and take a representative $\sigma$ of the above $[\sigma]\in H^1(F_i,\Bbb R)$ to be supported outside of $B$.

For the connected sum, we perform the Gromov-Lawson surgery on $B\times I(\frac{\varepsilon}{10})\subset M_i$ where $I(\frac{\varepsilon}{10})$ is the interval of length $\frac{\varepsilon}{10}$ to get $M_i'$ with a cylindrical end while still satisfying (\ref{eqn2}). On the other part of $X$, as before we put a metric with a cylindrical end isometric to that of this $M_i'$ while satisfying (\ref{eqn1}).

After gluing two pieces, we have
$$\int_{X} |\omega|^2 d\mu\leq C\varepsilon\ \ \ \ \textrm{ and } \ \ \ \ \int_X (s_-)^2d\mu < 1+\varepsilon,$$ for a constant $C>0$. Hence by Theorem \ref{th0}$$|4\pi \alpha\cup [\omega]|^2 < C\varepsilon(1+\varepsilon),$$ proving $\alpha\cup [\omega]=0$.

Finally when $\omega=\pi_i^* dt$, the adjunction inequality on a 3-manifold $X$, which  can be proved in the same way as 4-manifolds (\cite{kron}) by considering $X\times S^1$  gives  $$|\langle \alpha, [F_i]\rangle | \leq -\chi(F_i),$$ completing the proof.
\end{proof}

\begin{rmk}
As noted, $M_i\times S^1$ for $M_i$ as above admits a K\"ahler metric of constant scalar curvature, and each $M_i$ admits a $d_i$-fold covering space which is $F_i\times S^1$.

For $M_i$ with $F_i=S^2$, one can use the argument of gluing of moduli spaces of Seiberg-Witten equations along  cylindrical ends to prove $4\pi\alpha\cup dt=0.$
\end{rmk}

It seems plausible to conjecture :
\begin{conj}
Let $N_i$ for $i=1,\cdots , n$ be a closed oriented 3-manifold. Then (the rational part of) any monopole class of $N_1\# \cdots \# N_n$ is expressed as $\sum_i^n \alpha_i$ where $\alpha_i$ is a monopole class of $N_i$.
\end{conj}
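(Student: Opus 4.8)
The plan is to establish the conjecture by a neck-stretching degeneration of the connected sum together with a splitting analysis of the $3$-dimensional Seiberg-Witten moduli spaces along the separating spheres. First I would record the purely topological input: a standard Mayer-Vietoris computation for $X=N_1\#\cdots\# N_n$ (each connected-sum sphere $S^2$ separates and bounds on both sides) gives a canonical splitting $H^2(X;\Bbb Z)\cong\bigoplus_{i=1}^n H^2(N_i;\Bbb Z)$. Hence a monopole class $\alpha$ of $X$ has a unique expression $\alpha=\sum_i\alpha_i$ with $\alpha_i\in H^2(N_i;\Bbb Z)$, and the Spin$^c$ structure $\mathfrak{s}$ with $c_1(\mathfrak{s})=\alpha$ restricts to a Spin$^c$ structure $\mathfrak{s}_i$ on each punctured piece $N_i\setminus B^3$ with $c_1(\mathfrak{s}_i)$ representing $\alpha_i$. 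The goal then becomes to show that each $\alpha_i$ is a monopole class of $N_i$, i.e. that for an arbitrary metric $g_i$ on $N_i$ the equations for $\mathfrak{s}_i$ admit a solution.

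Next I would fix an arbitrary metric $g_i$ on each $N_i$ and build a one-parameter family of metrics $g_T$ on $X$ in which the pieces $(N_i\setminus B^3,g_i)$ are joined by long cylindrical necks isometric to $S^2(\rho)\times[-T,T]$ with the product metric. The essential geometric point is that such a neck has scalar curvature $2/\rho^2>0$, so by the standard Weitzenb\"ock estimate $|\Phi|^2\leq\max(-s,0)$ the spinor of any solution must vanish on the neck and the configuration there is forced to be reducible. Because $\alpha$ is a monopole class, for each $T$ there is a solution $(A_T,\Phi_T)$ for $\mathfrak{s}$ with respect to $g_T$. I would then extract uniform $C^\infty$ bounds on compact subsets of the pieces from the usual a priori estimates, while controlling the neck by the exponential-decay estimate valid on a cylinder whose cross-section $S^2$ has a spectral gap at the reducible, a gap guaranteed precisely by the positivity of the $S^2$ scalar curvature.

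Letting $T\to\infty$ and passing to a subsequence, I expect the $(A_T,\Phi_T)$ to converge, on each punctured piece, to a finite-energy solution $(A_\infty^{(i)},\Phi_\infty^{(i)})$ on $N_i\setminus B^3$ with a cylindrical end, asymptotic to the unique reducible on $S^2(\rho)\times\Bbb R$. The final step is a capping/removable-singularity argument: since the Spin$^c$ structure restricted to the separating $S^2$ is the trivial one and the asymptotic limit is the flat reducible, the end solution extends smoothly across the filled-in ball $B^3$ to a genuine solution on $(N_i,g_i)$ for $\mathfrak{s}_i$, with $c_1=\alpha_i$. As $g_i$ was arbitrary, this would prove that $\alpha_i$ is a monopole class of $N_i$.

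I expect the main obstacle to be precisely this analytic splitting-and-gluing step rather than the topology. One must rule out curvature energy escaping into the neck (no bubbling along $S^2\times\Bbb R$), establish the exponential approach to the reducible limit uniformly in $T$, and then carry out the removable-singularity extension across $B^3$; each is a nontrivial piece of the Kronheimer-Mrowka type connected-sum analysis, which the remark above already indicates is available in the special case $F_i=S^2$. A secondary subtlety is that the degeneration only produces a solution for the given limit metric $g_i$, so one must ensure that the family $g_T$ can be built over \emph{every} metric $g_i$ in order to conclude the full monopole-class property, rather than merely the existence of a single solution.
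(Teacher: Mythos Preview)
The statement you are attempting to prove is labelled a \emph{Conjecture} in the paper, and the paper offers no proof of it. What the paper does establish (in the proposition immediately preceding the conjecture) is only a partial constraint in the very special situation where some summands $M_i$ are surface bundles over $S^1$ with periodic monodromy: using the scalar-curvature estimate of Theorem~\ref{th0} together with carefully chosen collapsing metrics, the author shows that the component of a monopole class lying in $H^2(M_i;\Bbb Q)$ must be an integer multiple $m_i[F_i]$ with $|m_i|\le -\chi(F_i)$. That argument does not produce solutions on the individual summands and says nothing about the $H^2(N;\Bbb Q)$ component; the general statement is then recorded as an open problem. So there is no ``paper's proof'' to compare your attempt against.

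As for the strategy you outline, it is the natural one, but two points deserve flagging. First, the sentence ``by the standard Weitzenb\"ock estimate $|\Phi|^2\le\max(-s,0)$ the spinor of any solution must vanish on the neck'' is not correct as written: the maximum-principle bound is global, $\max_X|\Phi|^2\le\max_X(-s,0)$, and the right-hand side is governed by the negative-scalar-curvature region on the pieces $N_i$, not by the neck. Positive scalar curvature on the neck alone does not force $\Phi$ to vanish there; what it does give you is the spectral gap on the cross-section $S^2$ that you invoke in the next sentence, and hence exponential decay of irreducible solutions along the cylinder. You should rewrite that passage accordingly.

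Second, and more seriously, the heart of the argument --- uniform estimates as $T\to\infty$, no loss of energy into the neck, exponential approach to the reducible on $S^2\times\Bbb R$, and the removable-singularity extension over the capping ball --- is exactly the substantial analytic package that would be required to settle the conjecture, and you have not carried it out; you have only named the steps and the expected obstacles. The paper's remark that ``the argument of gluing moduli spaces of Seiberg-Witten equations along the cylindrical end'' handles the special case $F_i=S^2$ is about computing a pairing, not about propagating the monopole-class property to the summands. Until the compactness/gluing analysis is actually executed (or replaced by an invocation of a connected-sum theorem in monopole Floer theory strong enough to control mere existence of solutions rather than invariants), what you have is a plausible plan, not a proof.
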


\bigskip

\noindent{\bf Acknowledgement.} The author would like to warmly
thank Dr. Daewoong Chung for helpful discussions and providing a nice research environment at KIAS.

\end{document}